\newtheorem{theorem}{Theorem}
\newtheorem{lemma}{Lemma}
\newtheorem{cor}{Corollary}
\newtheorem{prob}{Problem}
\begin{document}

\title[One-sided power sum and cosine inequalities]
{One-sided power sum and cosine inequalities}
\author{Frits Beukers and Rob Tijdeman}
\address{Mathematical Institute, University Utrecht, P.O.Box 80010, 3508 TA Utrecht}
\email{f.beukers@uu.nl}
\address{Mathematical Institute, Leiden University, P.O.Box 9512, 2300 RA Leiden, The Netherlands}
\email {tijdeman@math.leidenuniv.nl}

\begin{abstract} In this note we prove results of the following types. Let be given distinct complex numbers $z_j$ satisfying the conditions
$|z_j| = 1, z_j \not= 1$ for $j=1, \dots, n$ and
for every $z_j$ there exists an $ i$ such that $z_i = \overline{z_j}. $
Then $$\inf_{k } \sum_{j=1}^n z_j^k \leq - 1. $$
If, moreover, none of the ratios $z_i/z_j$ with $i\ne j$
is a root of unity, then
$$\inf_{k } \sum_{j=1}^n z_j^k \leq - \frac {1} {\pi^4} \log n. $$
The constant $-1$ in the former result is the best possible.
The above results are special cases of upper bounds for $\inf_{k } \sum_{j=1}^n b_jz_j^k$ obtained in this paper.
\end{abstract}



\subjclass[2000] {11N30}

\keywords{Power sums, one-sided inequality, sums of cosines, Littlewood conjecture, multistep methods. }
\parindent=0pt

\maketitle

\section{Introduction}

Our colleague Marc N. Spijker asked the following question in view of an application in numerical analysis \cite{sp}:

\begin{prob}\label{prob1} Is it true that for given real numbers $b_j \geq 1$ and distinct complex numbers $z_j$ satisfying the conditions
$|z_j| = 1, z_j \not= 1$ for $j=1, \dots, n$ and
$$  for \ every \  z_j  \ there \ exists \ an \  i  \ such \ that \  z_i = \overline{z_j}, b_i=b_j $$
\noindent we have $\liminf_{k \to \infty} \sum_{j=1}^n b_j z_j^k \leq -1$?
\end{prob}

Note that by the conjugacy conditions on $b_i,z_i$ the sum $\sum_{j=1}^n b_jz_j^k$ is real for all $k$.
\ifx
It follows from Dirichlet's Simultaneous Approximation Theorem (\cite{hw}, Theorem 201) that under the above conditions   $\lim\sup_{k \to \infty} \sum_{j=1}^n b_j z_j^k = \sum_{j=1}^n b_j.$
Furthermore, Kronecker's Simultaneous Approximation Theorem (\cite{hw}, Theorem 443) implies that if $\pi$ and the arguments of the $z_j$'s are maximally linearly independent over the rationals, then
$$\liminf_{k \to \infty} \sum_{j=1}^n b_j z_j^k = - \sum_{j=1}^n b_j.$$
Here maximally independent means that from every pair of conjugates $z_i, z_j$ one is chosen.
\fi
\vskip.1cm

In Section 2 we answer Spijker's question in a slightly generalized and sharpened form (see Theorem \ref{thm1} and Corollary \ref{cor1}). The solution of Problem 1 has an application to numerical analysis,
more particularly Linear multistep methods (LMMs). They form a well-known class of numerical step-by-step methods for solving initial-value problems for certain systems of ordinary differential equations.
In many applications of such methods it is essential that the LMM has specific stability properties. An important property of this kind is named {\it boundedness}
and has recently been studied by Hundsdorfer, Mozartova and Spijker \cite{hms}. In that paper the {\it stepsize-coefficient} $\gamma$ is a crucial parameter in the study of boundedness.
In \cite{sp} Spijker attempts to single out all LMMs with a positive stepsize-coefficient $\gamma$ for boundedness. By using Corollary \ref{cor1} below he is able to nicely narrow the class of such LMMs.
\vskip.1cm

As a fine point we can remark that the bound $-1$ in Spijker's problem is the optimal one. Namely, take
$z_j=\zeta^j$ where $\zeta=e^{2\pi i/(n+1)}$ and $b_j=1$ for all $j$. Then the exponential sum equals
$n$ if $k$ is divisible by $n+1$ and $-1$ if not.
\vskip.1cm

If, moreover, none of $z_j/z_i$ with $i \neq j$ is a root of unity, then
the upper bound in Problem \ref{prob1} can be improved to $-\log n / \pi^4$.
We deal with this question in Theorem \ref{thm4} and more particularly Corollary \ref{cor3}.
The obtained results can easily be transformed into estimates for  $\inf_{k \in \mathbb{Z}} \sum_{j=1}^m b_j \cos (2 \pi\alpha_jk)$ where $b_j,\alpha_j$ are real numbers and $\alpha_1, \ldots, \alpha_n$ are strictly between $0$ and $1/2$.
Theorem \ref{continuous} states that this infimum is equal to $\inf_{t \in \mathbb{R}} \sum_{j=1}^m b_j \cos (2 \pi \alpha_jt)$, provided that the $\mathbb{Q}$-span of $\alpha_1, \ldots, \alpha_n$ does not contain 1.

\section{The general case}
We provide an answer to Problem \ref{prob1}.

\begin{theorem} \label{thm1}
Let $n$ be a positive integer. Let $b_1, \dots, b_n$ be nonzero complex numbers such that $b_{n+1-i}= \overline{b_i}$ for all $i=1,2, \dots, n.$
Let $z_1, \dots, z_n$ be distinct complex numbers with absolute value $1$, not equal to $1$, such that $z_{n+1-i}= \overline{z_i}$ for all $i=1,2, \dots, n.$ Then
$$ \liminf_{k \to \infty}  \sum_{j=1}^n b_j z^k_j \leq - \frac {\sum_{j=1}^n |b_j|^2} {\sum_{j=1}^n |b_j|}. $$
\end{theorem}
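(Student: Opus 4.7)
The plan is to study the real-valued sequence $T_k := \sum_{j=1}^n b_j z_j^k$ (it is real thanks to the pairing $b_{n+1-i}=\overline{b_i}$, $z_{n+1-i}=\overline{z_i}$) via its first and second Cesàro averages, and then convert $L^2$ information into an extremal bound by a simple one-dimensional quadratic inequality. Write $B := \sum_{j=1}^n |b_j|$, which is an obvious uniform upper bound for $|T_k|$.

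First I would compute the two averages. For every $z$ on the unit circle with $z\neq 1$, one has $\frac{1}{K}\sum_{k=1}^K z^k \to 0$ as $K\to\infty$. Applied to each $z_j$, this gives $\frac{1}{K}\sum_{k=1}^{K} T_k \to 0$. Applied to each $z_i\overline{z_j}$ with $i\neq j$ (which is $\neq 1$ because the $z_j$ are distinct and unimodular), together with the diagonal contributions $z_j\overline{z_j}=1$, this gives
\[
\frac{1}{K}\sum_{k=1}^K |T_k|^2 \;=\; \frac{1}{K}\sum_{k=1}^K T_k^2 \;\longrightarrow\; \sum_{j=1}^n |b_j|^2 .
\]
In particular the $T_k$ cannot all be nonpositive for large $k$, so $\limsup T_k\ge 0$ and in fact $\liminf T_k <0$ as well.

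Next, set $L := \liminf_{k\to\infty} T_k$ and fix $\varepsilon>0$. By definition of $\liminf$ there exists $K_0$ such that $T_k \ge L-\varepsilon$ for all $k \ge K_0$. Combining this with the trivial upper bound $T_k \le B$ yields the pointwise inequality
\[
\bigl(T_k - (L-\varepsilon)\bigr)\bigl(B - T_k\bigr) \;\ge\; 0 \qquad (k\ge K_0),
\]
i.e.\ $T_k^2 \le (B + L -\varepsilon)\,T_k - (L-\varepsilon)B$. Averaging over $k = K_0,\ldots,K$ and letting $K\to\infty$, the linear-in-$T_k$ term has Cesàro limit $0$, the quadratic term has Cesàro limit $\sum |b_j|^2$, and we obtain $\sum_{j=1}^n|b_j|^2 \le -(L-\varepsilon)\,B$. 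Sending $\varepsilon\to 0$ gives $L \le -\sum|b_j|^2 / B$, which is exactly the desired inequality.

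The one place requiring care, rather than being a serious obstacle, is the handling of the $\liminf$: one must take the lower bound $T_k\ge L-\varepsilon$ only eventually, restrict the Cesàro averages to the tail $k\ge K_0$, and verify that the initial finite segment does not affect the limits — all routine. Everything else is elementary: the two Cesàro limits come from geometric series, and the inequality $\sum|b_j|^2 \le -LB$ is obtained by bracketing $T_k$ between its asymptotic infimum and the trivial supremum $B$.
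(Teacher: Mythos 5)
Your proof is correct and follows essentially the same route as the paper's: both compare the Ces\`aro averages $\frac1K\sum_k s_k \to 0$ and $\frac1K\sum_k s_k^2 \to \sum_j|b_j|^2$ against the eventual bracketing $L-\varepsilon \le T_k \le B$. Your packaging of that bracketing as the single quadratic inequality $(T_k-(L-\varepsilon))(B-T_k)\ge 0$ is in fact tidier than the paper's bookkeeping, which splits the first Ces\`aro sum into its nonnegative and negative parts and bounds each separately before recombining.
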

\noindent Note that $\sum_{j=1}^n b_jz_j^k $ is real because of the conjugacy conditions.

By applying the Cauchy-Schwarz inequality we immediately obtain the following consequence.

\begin{cor} \label{cor1}
Let $n,b_j,z_j$ be as in Theorem \ref{thm1}. Then
$$\liminf_{k \to \infty} \sum_{j=1}^n b_j z_j^k \leq - \frac 1n \sum_{j=1}^n |b_j|. $$
\end{cor}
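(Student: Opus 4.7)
The plan is essentially routine, since the corollary is advertised as an immediate consequence of Theorem~\ref{thm1} via Cauchy--Schwarz. Starting from Theorem~\ref{thm1}, we already have
$$\liminf_{k \to \infty}\sum_{j=1}^n b_j z_j^k \leq -\frac{\sum_{j=1}^n |b_j|^2}{\sum_{j=1}^n |b_j|},$$
so the only task is to show that the right-hand side is bounded above by $-\frac{1}{n}\sum_{j=1}^n |b_j|$. Equivalently, I would verify the purely numerical inequality
$$\frac{\sum_{j=1}^n |b_j|^2}{\sum_{j=1}^n |b_j|} \;\geq\; \frac{1}{n}\sum_{j=1}^n |b_j|,$$
which, after clearing denominators, is the same as $n\sum_{j=1}^n |b_j|^2 \geq \bigl(\sum_{j=1}^n |b_j|\bigr)^2$.

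This last inequality is exactly Cauchy--Schwarz applied to the vectors $(|b_1|,\dots,|b_n|)$ and $(1,\dots,1)$ in $\mathbb{R}^n$: we have $\sum_{j=1}^n |b_j|\cdot 1 \leq \bigl(\sum_{j=1}^n |b_j|^2\bigr)^{1/2}\bigl(\sum_{j=1}^n 1\bigr)^{1/2}$, and squaring both sides yields exactly what we need. Since all $b_j$ are nonzero, $\sum_{j=1}^n |b_j| > 0$, so dividing back through is legitimate, and chaining the two inequalities gives the statement of Corollary~\ref{cor1}.

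There is no real obstacle here; the only thing to be slightly careful about is the direction of the inequalities after multiplying by the negative sign, and the nonvanishing of $\sum |b_j|$ (guaranteed by the hypothesis that the $b_j$ are nonzero). In particular, equality in Cauchy--Schwarz corresponds to all $|b_j|$ being equal, so the corollary is tight precisely when the $|b_j|$ are constant, in which case it already coincides with Theorem~\ref{thm1}.
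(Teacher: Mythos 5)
Your proof is correct and is exactly the argument the paper intends: apply Cauchy--Schwarz to $(|b_1|,\dots,|b_n|)$ and $(1,\dots,1)$ to get $\bigl(\sum_{j=1}^n |b_j|\bigr)^2 \le n\sum_{j=1}^n |b_j|^2$, then combine with the bound of Theorem~\ref{thm1}. Nothing further is needed.
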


Obviously this answers  Problem \ref{prob1}, since in that case $b_j\in\mathbb{R}_{\ge 1}$
for all $j$.
\vskip.1cm

In the special case when the $b_j$ are positive real numbers we can even drop
the distinctness condition on the $z_j$.

\begin{theorem}\label{thm2}
Let $n,b_j,z_j$ be as in Theorem \ref{thm1} with the additional condition $b_j\in\mathbb{R}_{>0}$
for all $j$ and let the distinctness condition on the $z_j$ be dropped. Then the conclusions
of Theorem \ref{thm1} and Corollary \ref{cor1} still hold.
\end{theorem}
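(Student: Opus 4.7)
The plan is to reduce Theorem \ref{thm2} to Theorem \ref{thm1} by collapsing repeated values of the $z_j$ into a single occurrence with an aggregated weight, and then verifying that the bounds only improve under this aggregation.

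First I would list the distinct values among $z_1,\dots,z_n$ as $w_1,\dots,w_m$ (with $m\le n$) and set
$$
B_\ell \;=\; \sum_{j:\,z_j=w_\ell} b_j, \qquad \ell=1,\dots,m.
$$
Since each $b_j>0$ is real, each $B_\ell$ is a positive real. The identity $\sum_{j=1}^n b_j z_j^k = \sum_{\ell=1}^m B_\ell w_\ell^k$ holds for every $k$, so the liminfs coincide. The next step is to verify that the aggregated data $(B_\ell,w_\ell)$ satisfies the hypotheses of Theorem \ref{thm1}: the $w_\ell$ are distinct and of modulus $1$, none equals $1$, and complex conjugation permutes $\{w_1,\dots,w_m\}$ as an involution (using that $z_j\mapsto \overline{z_j}$ permutes the multiset $\{z_1,\dots,z_n\}$). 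Moreover, if $w_{\ell'}=\overline{w_\ell}$ then the substitution $i=n+1-j$ together with $b_{n+1-i}=b_i$ (both sides are positive reals equal to $\overline{b_i}$) shows $B_{\ell'}=B_\ell$. Reindexing the $w_\ell$ so that the conjugation involution becomes $\ell\mapsto m+1-\ell$ (placing the unique real fixed point $-1$, if present, at the middle index) puts us exactly in the situation of Theorem \ref{thm1}.

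Applying Theorem \ref{thm1} to the aggregated system gives
$$
\liminf_{k\to\infty}\sum_{j=1}^n b_jz_j^k \;=\; \liminf_{k\to\infty}\sum_{\ell=1}^m B_\ell w_\ell^k \;\le\; -\,\frac{\sum_{\ell=1}^m B_\ell^{\,2}}{\sum_{\ell=1}^m B_\ell}.
$$
Because all $b_j$ are positive, $\sum_\ell B_\ell = \sum_j b_j$ and, expanding each square,
$$
\sum_{\ell=1}^m B_\ell^{\,2} \;=\; \sum_{\ell=1}^m \Bigl(\sum_{j:\,z_j=w_\ell} b_j\Bigr)^{\!2} \;\ge\; \sum_{\ell=1}^m \sum_{j:\,z_j=w_\ell} b_j^{\,2} \;=\; \sum_{j=1}^n b_j^{\,2} \;=\; \sum_{j=1}^n |b_j|^2.
$$
Combining these two observations yields the bound of Theorem \ref{thm1}. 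For the extension of Corollary \ref{cor1}, Cauchy--Schwarz applied to the aggregated weights gives $\sum_\ell B_\ell^{\,2}\ge (\sum_\ell B_\ell)^2/m = (\sum_j b_j)^2/m$, so the liminf is at most $-(\sum_j b_j)/m \le -(\sum_j b_j)/n = -(1/n)\sum_j|b_j|$.

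I do not foresee a genuine obstacle here: the whole argument rests on the positivity of the $b_j$, which ensures both that the aggregated weights are again admissible and that the inequality $\sum_\ell B_\ell^{\,2}\ge \sum_j b_j^{\,2}$ goes the right way. The only delicate point is the bookkeeping needed to check that the reordered tuple $(B_\ell,w_\ell)$ still satisfies the symmetric conjugacy indexing demanded by Theorem \ref{thm1}, and this reduces to the observation that conjugation acts as an involution on the distinct values.
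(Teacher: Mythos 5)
Your proposal is correct and follows essentially the same route as the paper: group the repeated $z_j$ into distinct values $w_\ell$ with aggregated positive weights $B_\ell$, apply Theorem \ref{thm1} to the aggregated sum, and use $\sum_\ell B_\ell=\sum_j b_j$ together with $\sum_\ell B_\ell^2\ge\sum_j b_j^2$ (valid by positivity) to recover the original bound. Your extra care in checking that conjugation still acts as the required involution on the distinct values is a detail the paper leaves implicit, but it is the same argument.
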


Furthermore, since $\sum_{j=1}^n b_j z_j^k$ is almost periodic we can apply Dirichlet's Theorem on simultaneous diophantine approximation, and find that the liminf coincides with the infimum in the above
theorems and corollary.

\begin{proof} [Proof of Theorem \ref{thm1}.]
Put $s_{k} = b_1z_1^k +  \dots + b_n z_n^k$.
Put $c = - \lim\inf_{ k \to \infty} s_k$. Note that the $z_j^k$ equal $0$ on average for each $j$. Hence
the same holds for the values $s_k$ and thus we see that $c \geq 0$.
Let $\varepsilon > 0$. Choose $N$ so large that $s_k > -(c + \varepsilon)$ for all $k \geq N$.
For any positive integer $K$ consider the sum
$$ \Sigma_1 := \sum_{k=N}^{N+K-1} s_k.$$
Since none of the $z_i$ is 1, we have
$$ \Sigma_1 = \sum_{j=1}^n \sum_{k=N}^{N+K-1} b_j z_j^k = \sum_{j=1}^n b_j \frac {z_j^N - z_j^{N+K}} {1 - z_j}. $$
Thus there exists $C_1$, independent of $N$ and $K$, such that
$$
| \Sigma_1 | \leq \sum_{j=1}^n \frac {2 |b_j|} {|1 - z_j|} = C_1.
$$
Define $\Sigma_1^+$ to be the subsum of $\Sigma_1$ of all nonnegative $s_k$ and $\Sigma_1^-$ to be minus the subsum of $\Sigma_1$ of all negative $s_k$.
Let $P$ be the number of nonnegative $s_k$ for $k=N, \dots, N+K-1$. Then
$$ \Sigma_1^+ \leq \Sigma_1^- + C_1 \leq (K-P) (c+ \varepsilon) + C_1$$
and
$$ \Sigma_1^- \leq \Sigma_1^+ + C_1 \leq P \sum_{i=1}^n |b_i| + C_1.$$
Consider the sum $\Sigma_2 := \sum_{k=N}^{N+K-1} s_k^2$. Then, using $\overline{s_k}=s_k$,
\begin{eqnarray*}
\Sigma_2 &=& \sum_{k=N}^{N+K-1} |s_k|^2
= \sum_{i,j} \sum_{k=N}^{N+K-1} b_i \overline{b_j} z_i^k \overline{z_j}^k\\
&=& K \sum_{i=1}^n |b_i|^2 + \sum_{i \ne j} b_i \overline{b_j} \frac {(z_i/z_j)^N - (z_i/z_j)^{N+K}}
{ 1 - z_i/z_j}.
\end{eqnarray*}
Hence there exists $C_2$, independent of $N$ and $K$, such that
$$\left| \Sigma_2 - K \sum_{i=1}^n |b_i|^2 \right| \leq \sum_{i \ne j} |b_ib_j|
\frac {2} { | 1- z_i/z_j|} = C_2.$$
The terms $s_k^2$ in $\Sigma_2$ can be estimated above by $(\sum_{i=1}^n |b_i|) \Sigma_1^+$
when $s_k \geq 0$ and by $(c + \varepsilon) \Sigma_1^-$ when $s_k<0$. So we get the upper bound
$$ \Sigma_2 \leq (\sum _{i=1}^n |b_i|) \Sigma_1^+ + (c + \varepsilon) \Sigma_1^-.$$
Now use the upper bounds for $\Sigma_1^{\pm}$ we found above to get
$$ \Sigma_2 \leq (c+ \varepsilon) (K-P)(\sum_{i=1}^n |b_i|) + (c+ \varepsilon) P (\sum_{i=1}^n |b_i|) + C_3,$$
where $C_3 = C_1(c + \varepsilon + \sum_{i=1}^n  |b_i|).$
Combine this with the lower bound $\Sigma_2 \geq -C_2 + K \sum_{i=1}^n |b_i|^2$ to get
$$ -C_2 + K \sum_{i=1}^n |b_i|^2 \leq  (c + \varepsilon)K \sum_{i=1}^n |b_i| + C_3.$$
Dividing on both sides by $K$ and letting $K \to \infty$ yields
$$ \sum_{i=1}^n |b_i|^2 \leq (c+ \varepsilon) \sum_{i=1}^n |b_i|.$$
Since $\varepsilon$ can be chosen arbitrarily small, the assertion follows.

\end{proof}

\begin{proof} [Proof of Theorem \ref{thm2}.] Take the distinct elements from
$\{z_1,\ldots,z_n\}$ and write them as $w_1,\ldots,w_m$. Denote for any $r$ the sum of the $b_j$
over all $j$ such that $z_j=w_r$ by $B_r$. Then $s_k=\sum_{r=1}^m B_rw_r^k$ for every
$k$. We can apply Theorem \ref{thm1} to this sum to obtain
$$\liminf_{k\to\infty}s_k\le -\frac{\sum_{r=1}^m B_r^2}{\sum_{r=1}^m B_r}.$$
Since the $b_j$ are positive reals, we get that $\sum_{r=1}^m B_r=\sum_{j=1}^n b_j$
and $\sum_{r=1}^m B_r^2\ge \sum_{j=1}^n b_j^2$. Our theorem now follows.
\end{proof}

\section{The non-degenerate case}
In the next theorem we make an additional assumption on the $z_i$, which allows us to improve
on the upper bound in Theorem \ref{thm1} considerably.

\begin{theorem}\label{thm4} Let
$b_j\in\mathbb{C}$ and let $z_j\in\mathbb{C}$ be as in Theorem \ref{thm1}.
Assume in addition that $z_i\ne-1$ for all $i$ and that none of the ratios
$z_j/z_i$ with $i\ne j$ is a root of unity. Then
$$\inf_k \sum_{j=1}^nb_jz_j^k<- \frac {1} {\pi^4} (\min_j |b_j|) \log n.$$
\end{theorem}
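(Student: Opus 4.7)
The plan is to amplify Theorem \ref{thm1} by applying it not to $s_k = \sum_j b_j z_j^k$ itself, but to carefully chosen nonnegative convolutions of $s_k$, using the non-resonance hypothesis to push the bound from $O(1)$ up to $\Omega(\log n)$.

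First, since the $b_j,z_j$ satisfy the conjugacy conditions, every $s_k$ is real. The additional hypotheses $z_i\ne -1$ and $z_i/z_j$ not a root of unity for $i\ne j$ imply that $n=2m$ (all $z_j$ come in distinct conjugate pairs) and, via Kronecker's theorem, that the orbit $k\mapsto(z_1^k,\dots,z_n^k)$ is dense in a large subgroup of the torus. I set $c:=-\inf_k s_k$, assume toward contradiction that $c<\frac{1}{\pi^4}(\min_j|b_j|)\log n$, and work with $s_k+c\ge 0$.

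Second, I introduce an auxiliary averaging device: for any polynomial $W(z)=\sum_l w_l z^l$ with $w_l\ge 0$, the sum $T_k:=\sum_l w_l s_{k+l}=\sum_j b_j W(z_j) z_j^k$ satisfies $T_k\ge -c\cdot W(1)$. Because $W$ has real coefficients the conjugacy conditions survive, so Theorem \ref{thm1} applies to $T_k$, giving
$$c\cdot W(1)\;\ge\;\frac{\sum_{j:W(z_j)\ne 0}|b_j|^2|W(z_j)|^2}{\sum_{j:W(z_j)\ne 0}|b_j|\,|W(z_j)|}.$$
In particular, if we can choose $W$ so that $|W(z_j)|$ is close to $W(1)$ for a single index $j_0$ and small for all others, the right-hand side drops down to roughly $|b_{j_0}|$ and we only recover Theorem \ref{thm1}. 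The gain comes from amplifying many indices simultaneously.

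Third, and this is the decisive step, I choose $W$ as a product of Fejér-type factors at geometric scales. Concretely, the plan is to pick integer shifts $a_1,\dots,a_R$ by a pigeonhole/Kronecker argument so that the $z_j$'s split into nested dyadic classes $J_1\supset J_2\supset\cdots\supset J_R$ on which successive products $W_r(z)=\prod_{\rho\le r}\tfrac12(1+\mathrm{Re}\,z^{a_\rho})$ remain close to $W_r(1)$. Applying the inequality above to each $W_r$ and summing the resulting lower bounds on $c$ over $r=1,\dots,R=\lfloor\log_2 n\rfloor$ produces the logarithmic factor: each scale contributes $\ge(\min|b_j|)/\pi^4$ worth of negativity, exactly as in the telescoping proofs of Littlewood-type lower bounds for the $L^1$-norm of cosine sums. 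The constant $\pi^{-4}$ should emerge from two nested cosine factors $|\tfrac12(1+\cos\theta)|$, each responsible for one power of $\pi^{-2}$.

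The main obstacle is the combinatorial construction of the shifts $a_r$. Unlike in the fully $\mathbb{Q}$-linearly-independent case, the arguments $\alpha_j$ of the $z_j$ may satisfy nontrivial additive relations, so Kronecker does not place $(z_j^{a_r})_j$ freely in the torus. One must therefore group the $\alpha_j$ dyadically by size (in $(0,1/2)$, after the conjugacy reduction) and choose each $a_r$ so that multiplication by $a_r$ lands the $\alpha_j$ in the $r$-th group near $\tfrac12$, while not disturbing the classes already captured at finer scales. That this can be done while keeping $W_R(1)$ of polynomial size is where the non-resonance hypothesis is used essentially; without it (e.g.\ for roots of unity) the construction collapses, matching the optimality of Theorem \ref{thm1} in the degenerate case.
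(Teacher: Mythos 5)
There is a genuine gap, and it sits exactly where you place your ``decisive step.'' Your scheme tries to extract the factor $\log n$ by elementary amplification of Theorem \ref{thm1}, but this is in effect an attempt to reprove Littlewood's conjecture on the $L^1$-norm of exponential sums. The paper does not do this: it imports that result as a black box (Lemma \ref{littlewood}, Stegeman's quantitative form of the McGehee--Pigno--Smith/Konyagin theorem), reduces the general non-degenerate case to a trigonometric polynomial $f(t)=\sum_j b_j e^{iq_jt}$ with \emph{distinct nonzero integer} frequencies via Kronecker's theorem (passing to the closure of the orbit in the torus and then restricting to a dense one-parameter subgroup $\omega_l=e^{ip_lt}$), and finally converts the $L^1$ lower bound into a bound on $\min_t f$ using $\int_{-\pi}^{\pi}f=0$, hence $\int|f|=2\int f^-<4\pi c$. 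That last step is also where the constant comes from: $\pi^{-4}=\frac{1}{4\pi}\cdot\frac{4}{\pi^3}$, not from two nested Fej\'er factors as you guess.

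Concretely, your amplification cannot produce $\log n$. For any $W$ with nonnegative coefficients your inequality reads
$$c\,W(1)\;\ge\;\frac{\sum_j |b_j|^2|W(z_j)|^2}{\sum_j |b_j|\,|W(z_j)|},$$
and the right-hand side is a weighted average of the numbers $|b_j||W(z_j)|$, hence at most $\max_j|b_j|\cdot\max_j|W(z_j)|\le \max_j|b_j|\,W(1)$. So every single application yields a lower bound on $c$ that is at most $\max_j|b_j|$. Your proposal to ``sum the resulting lower bounds on $c$ over $r=1,\dots,R$'' is then logically invalid: if $c\ge A_1$ and $c\ge A_2$ you may conclude $c\ge\max(A_1,A_2)$, never $c\ge A_1+A_2$. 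The telescoping in genuine proofs of Littlewood-type bounds works because the dyadic pieces lower-bound \emph{different} portions of a single pairing $\int f\bar g$ with a carefully built dual function $g$ of bounded sup-norm (or, in the pre-1981 elementary versions, yields only fractional powers of $\log n$); it is not obtained by repeatedly invoking a second-moment inequality on convolutions of $s_k$. Unless you are prepared to reproduce the McGehee--Pigno--Smith or Konyagin construction, you should follow the paper and quote it, after carrying out the Kronecker reduction to integer frequencies that your sketch also leaves incomplete.
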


If the $z_i$ satisfy the conditions of Theorem \ref{thm4} we say that we are in
the {\it non-degenerate case}. Notice in particular that $z_i/z_{n+1-i}=
z_i^2$ and hence none of the $z_i$ are roots of unity in the non-degenerate
case. For the proof of Theorem \ref{thm4} we use the following result.

\begin{lemma}\label{littlewood}
Let $b_1, \dots, b_n \in \mathbb{C}$. Let $q_1, \ldots, q_n$ be distinct integers. Set
$f(t) = \sum_{j=1}^n b_j e^{iq_jt}.$
Then
$$\int_{-\pi}^{\pi} |f(t)| dt \geq \frac {4} {\pi^3} (\min_j |b_j|) \log n.$$
\end{lemma}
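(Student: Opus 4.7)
This lemma is a form of the Littlewood conjecture; the constant $4/\pi^3$ matches that of McGehee--Pigno--Smith, and my plan is to follow their scheme, passing to a dual formulation and then constructing an appropriate test function.

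Step one (duality). The standard $L^1$--$L^\infty$ duality on $[-\pi,\pi]$ yields
$$\int_{-\pi}^{\pi}|f(t)|\,dt \;=\; \sup_{\|g\|_\infty\le 1}\,\mathrm{Re}\!\int_{-\pi}^{\pi} f(t)\overline{g(t)}\,dt.$$
Writing $\hat g(q)=\frac{1}{2\pi}\int_{-\pi}^{\pi} g(t)e^{-iqt}\,dt$, the integrand on the right unfolds to $2\pi\sum_j b_j\overline{\hat g(q_j)}$. Thus the task reduces to producing a single $g$ with $\|g\|_\infty\le 1$ such that
$$\mathrm{Re}\sum_{j=1}^n b_j\overline{\hat g(q_j)} \;\ge\; \frac{2}{\pi^4}\,(\min_j|b_j|)\log n.$$

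Step two (target coefficients). Relabel the frequencies so that $|q_1|\le\cdots\le|q_n|$, and for each $j$ pick a unit phase $\varepsilon_j\in\mathbb{C}$ with $\varepsilon_j b_j=|b_j|$. The idea is to seek $g$ whose conjugate Fourier coefficients at the prescribed frequencies satisfy $\overline{\hat g(q_j)}\approx \varepsilon_j/(cj)$ for a suitable absolute constant $c$. Summing, this produces a contribution of at least $\frac{1}{c}\sum_{j=1}^n |b_j|/j \ge \frac{1}{c}(\min_j|b_j|)\log n$, giving the claim with $c$ matched to the constant $4/\pi^3$.

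Step three (building $g$, the hard part). Following McGehee--Pigno--Smith, I would assemble $g=\sum_{k=1}^n \phi_k$ where each $\phi_k$ is a Fej\'er-type trigonometric polynomial, phase-modulated by $e^{iq_k t}$ and scaled so as to contribute a coefficient of order $1/k$ at $q_k$ while producing only controlled spillover at the other $q_j$. The main obstacle is showing that, despite $g$ being a sum of $n$ such pieces, $\|g\|_\infty\le 1$; a naive triangle inequality loses the very logarithmic factor that one is trying to capture. One overcomes this by a dyadic grouping of the frequencies $\{q_k\}$ combined with the fact that the underlying (conjugate-)Fej\'er-type kernel has $L^1$-norm bounded by an absolute constant, independent of its degree. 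It is precisely this quantitative kernel estimate that produces the numerical value $4/\pi^3$ in the final inequality, and filling in the resulting (standard but technical) kernel computations completes the proof.
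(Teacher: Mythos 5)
The paper does not prove this lemma at all: it is quoted as a known result, with the proof being a citation of Stegeman's paper \emph{On the constant in the Littlewood problem} (Math.\ Ann.\ 261 (1982), 51--54), which sharpens the McGehee--Pigno--Smith/Konyagin resolution of the Littlewood conjecture. So the relevant question is whether your sketch could stand as an actual proof, and it cannot: Steps one and two are correct and are indeed the standard reduction (by $L^1$--$L^\infty$ duality it suffices to exhibit $g$ with $\|g\|_\infty\le 1$ and $\mathrm{Re}\,\varepsilon_j\overline{\hat g(q_j)}\gtrsim 1/j$, and the harmonic sum then yields the $\log n$), but Step three, which you defer as ``standard but technical kernel computations,'' is the entire content of the theorem and is not filled in.

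Moreover, the route you indicate for Step three would not work as described. A sum of $n$ phase-modulated Fej\'er-type polynomials, each contributing a coefficient of size $1/k$ at $q_k$, cannot be shown to have sup-norm $\le 1$ by invoking an $L^1$ bound on the underlying kernel: an $L^1$ kernel bound controls convolution operators, not the $L^\infty$ norm of a sum of $n$ modulated copies, and dyadic grouping alone does not rescue this (this is exactly the obstruction that kept the Littlewood conjecture open for decades; the Dirichlet-kernel example shows the naive construction genuinely fails). The McGehee--Pigno--Smith argument instead builds $g$ by an \emph{inductive, multiplicative} scheme: the frequencies are grouped into blocks, and at each stage the previous function is damped by a factor of the form $\exp(-(f+i\tilde f))$ involving the conjugate function, so that the correction has one-sided spectrum and the sup-norm stays bounded; it is this analytic-completion device, not a Fej\'er kernel estimate, that produces the constant. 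Finally, the constant $4/\pi^3$ is not the McGehee--Pigno--Smith constant (theirs is considerably smaller); it is obtained only after Stegeman's careful optimization of the parameters in that construction. As written, your proposal therefore has a genuine gap at its central step; the honest options are to reproduce the full MPS/Stegeman construction or to cite it, as the paper does.
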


\begin{proof} See Stegeman, \cite{st}.
\end{proof}

Lemma \ref{littlewood} is an refinement of a result independently obtained by McGehee, Pigno, Smith \cite{mps} and Konyagin \cite{ko} who thereby established a conjecture of Littlewood \cite{hl}.
Already Littlewood noticed that the constant in Lemma \ref{littlewood} cannot be better
than $4/ \pi^2$, cf. \cite{st} p. 51. Stegeman expects that the optimal constant in Lemma 
\ref{littlewood} is $4/ \pi^2$ indeed. As a fine point
we mention that the choice of $4/\pi^3$ by Stegeman is for esthetical reasons only,
the best possible value with his method happens to lie close to this value.
See also \cite{tr}.
\vskip.1cm
The following lemma connects Littlewood's
conjecture with minima of sums of exponentials.

\begin{lemma}\label{lemma1}
Let $b_1, \dots, b_n \in \mathbb{C}$. Let $q_1, \ldots, q_n$ be distinct nonzero integers.
Suppose that $f(t) = \sum_{j=1}^n b_j e^{iq_jt}$ is realvalued for all real $t$. Then
$$\min_{t\in\mathbb{R}}f(t) < -\frac {1} {\pi^4} (\min_j  |b_j|) \log n.$$
\end{lemma}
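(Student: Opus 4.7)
The plan is to integrate the obvious inequality $f^-\le -\min f$ over a period and relate it to $\int|f|$, which is controlled by Lemma \ref{littlewood}.

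First, I would exploit that all $q_j$ are \emph{nonzero} to get $\int_{-\pi}^{\pi} e^{iq_j t}\,dt = 0$ for each $j$, and therefore $\int_{-\pi}^{\pi} f(t)\,dt = 0$. Writing $f = f^+ - f^-$ with $f^\pm = \max(\pm f,0)\ge 0$, this mean-zero property gives $\int f^+ = \int f^-$, so that
\[
\int_{-\pi}^{\pi}|f(t)|\,dt \;=\; 2\int_{-\pi}^{\pi} f^-(t)\,dt.
\]

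Next, since $f$ is real-valued, the pointwise bound $f^-(t)\le -\min_{s}f(s)$ holds everywhere, and integration over $[-\pi,\pi]$ yields
\[
\int_{-\pi}^{\pi} f^-(t)\,dt \;\le\; 2\pi\bigl(-\min_{t\in\mathbb{R}} f(t)\bigr).
\]
Combining the two displays and then invoking Lemma \ref{littlewood} gives
\[
-\min_{t\in\mathbb{R}} f(t) \;\ge\; \frac{1}{4\pi}\int_{-\pi}^{\pi}|f(t)|\,dt \;\ge\; \frac{1}{4\pi}\cdot\frac{4}{\pi^3}\bigl(\min_j|b_j|\bigr)\log n \;=\; \frac{1}{\pi^4}\bigl(\min_j|b_j|\bigr)\log n,
\]
which is the desired inequality with $\le$ in place of $<$.

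The one point requiring care is the \emph{strict} inequality. Equality in $\int_{-\pi}^{\pi} f^-\le 2\pi(-\min f)$ would force $f^-$ to equal the constant $-\min f$ almost everywhere on $[-\pi,\pi]$, and by continuity (indeed analyticity) of the trigonometric polynomial $f$ this would force $f$ to be identically constant. But since every frequency $q_j$ is nonzero, a constant $f$ means $b_1=\cdots=b_n=0$, in which case the claim is vacuous. So as soon as some $b_j\ne 0$, the last inequality is strict and we obtain $\min_t f(t) < -\frac{1}{\pi^4}(\min_j|b_j|)\log n$. The main obstacle is really just this strict-inequality bookkeeping; all the heavy lifting has been absorbed into Lemma \ref{littlewood}.
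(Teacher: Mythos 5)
Your proof is correct and follows essentially the same route as the paper's: use that the nonzero frequencies force $\int_{-\pi}^{\pi}f=0$, deduce $\int|f|=2\int f^-\le 4\pi\,(-\min f)$, and combine with Lemma \ref{littlewood}. In fact you are slightly more careful than the paper, which simply writes the strict inequality $2\int f^-<4\pi c$ without comment; your equality analysis (forcing $f$ constant, hence all $b_j=0$) is the right justification, the only cosmetic slip being that in the all-zero case the stated conclusion is false rather than ``vacuous,'' a degenerate case excluded in every application.
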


\begin{proof}
Denote the minimum of $f(t)$ by $-c$.
Define $f^+(t)=\max(f(t),0)$ and $f^-(t)=-\min(f(t),0)$. Then $f=f^+ - f^-$.
Since the exponents $q_j$ are nonzero, we have that
$\int_{-\pi}^{\pi}f(t)dt=0$, hence that $\int_{-\pi}^{\pi}f^+(t)dt=\int_{-\pi}^{\pi}f^-(t)dt$ and
$$\int_{-\pi}^{\pi}|f(t)|dt=\int_{-\pi}^{\pi}f^+(t)dt+
\int_{-\pi}^{\pi}f^-(t)dt=2\int_{-\pi}^{\pi}f^-(t)dt<4\pi c.$$
Now combine this upper bound with the lower bound from Lemma 1 to find the assertion of our lemma.
\end{proof}

\begin{proof}[Proof of Theorem \ref{thm4}.]
Consider the subgroup $G$ of $\mathbb{C} \setminus \{0\}$ generated by $z_1,\ldots,z_n$.
By the fundamental theorem of finitely generated abelian groups, $G$ is
isomorphic to $T\times\mathbb{Z}^d$ for some $d$ and some finite group $T$ consisting of roots of unity.
More concretely this means that there exist $w_1,\ldots,w_d\in G$ and $\mu\in T$
such that $w_1,\ldots,w_d$ are multiplicatively independent and every $z_j$
can be written in the form
$$z_j=\mu^{r_j}w_1^{a_{j1}}\cdots w_d^{a_{jd}},\quad r_j,a_{ji}\in\mathbb{Z},\quad
0\le r_j<|T|.$$
It follows from the condition in Theorem 1 that $a_{n+j-1,h} = - a_{j,h}$ for all $j = 1, \dots, n$ and $h = 1, \dots, d$.
Our exponential sum can be rewritten as
$$s_k:=\sum_{j=1}^nb_j\mu^{kr_j}w_1^{ka_{j1}}\cdots w_d^{ka_{jd}}.$$
By Kronecker's approximation theorem, the closure of the set of
points $(w_1^k,\ldots,w_d^k)$ for $k\in\mathbb{Z}_{\ge0}$ equals
the set $(S^1)^d$ consisting of points $(\omega_1,\ldots,\omega_d)$
with $|\omega_j|=1$ for $j=1,\ldots,d$. The same holds true if we
restrict ourselves to values of $k$ that are divisible by $|T|$. Hence
$$\inf s_k\le\min_{|\omega_1|=\cdots=|\omega_d|=1}
\sum_{j=1}^nb_j\omega_1^{a_{j1}}\cdots \omega_d^{a_{jd}}.$$
Because there are no roots of unity among the $z_j$, for every $j$ at least one coefficient
$a_{ji}$ is non-zero. Since the ratios $z_i/z_j$ are not a root
of unity for every $i\ne j$, the vectors $(a_{j1},\ldots,a_{jd})$ are pairwise
distinct. Hence we can choose $p_1,\ldots,p_d\in\mathbb{Z}$ such that the
numbers $q_j=a_{j1}p_1+\cdots+a_{jd}p_d,\ j=1,\ldots,n$ are distinct and nonzero.
Let us now restrict to the points with
$\omega_l=e^{ip_lt},t\in\mathbb{R},l=1,\ldots,d$. Then we get
$$\inf s_k\le \min_{t\in\mathbb{R}}\sum_{j=1}^n b_j e^{ iq_jt},$$
where the sum on the right-hand side is real for all $t$ in view of $b_{n+1-j} = \overline{b_j}, q_{n+1-j} = -q_j$ for $j=1, \dots, n$.
By Lemma \ref{lemma1} the right-hand side is bounded above by $- \frac{1} {\pi^4}( \min_j|b_j|)\log n.$
\end{proof}

In the special case $b_j=1$ for all $j$ we have the following corollary.

\begin{cor}\label{cor3} Let $z_1,\ldots,z_n$ be as in Theorem \ref{thm1}. Suppose in addition that
none of the ratios $z_i/z_j$ with $i\ne j$ is a root of unity. Then
$$\inf_{k\in\mathbb{Z}}\sum_{j=1}^nz_j^k<-{1\over\pi^4}\log n.$$
\end{cor}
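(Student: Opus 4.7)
The corollary is the special case $b_j = 1$ of Theorem \ref{thm4}, and the plan is to carry out this one-line substitution.

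First, verify the hypotheses of Theorem \ref{thm4} for the constant choice $b_j = 1$. The conjugacy condition $b_{n+1-i} = \overline{b_i}$ holds automatically because each $b_j = 1$ is real. The non-degeneracy condition on the ratios $z_i/z_j$ is given as part of the corollary's own hypothesis. The auxiliary restriction $z_i \neq -1$ that appears in Theorem \ref{thm4} is subsumed by the non-degeneracy whenever $i \neq n+1-i$, since as observed in the remark following Theorem \ref{thm4} one has $z_i/z_{n+1-i} = z_i^2$, and this forces $z_i$ not to be a root of unity. The only possible exception is a self-paired value $z_{(n+1)/2} = -1$ when $n$ is odd; in that case one splits off this term, applies Theorem \ref{thm4} to the remaining $n-1$ values (which already satisfy its hypotheses), and restricts $k$ to odd integers so that $(-1)^k = -1$ further helps the bound.

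Second, substitute into the conclusion of Theorem \ref{thm4}. With $b_j = 1$ we have $\min_j|b_j| = 1$, and the conclusion reads
$$\inf_k \sum_{j=1}^n z_j^k < -\frac{1}{\pi^4} \log n,$$
which is exactly the desired inequality.

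There is no genuine obstacle inside the corollary; all the work is in Theorem \ref{thm4}, whose proof uses Kronecker's simultaneous approximation theorem to replace the discrete infimum over $k$ by a continuous minimum over a torus, and then passes through Lemma \ref{lemma1} (itself a consequence of the Littlewood-type lower bound in Lemma \ref{littlewood}) to extract the $\log n$ factor in the estimate for the negative part of a realvalued exponential sum.
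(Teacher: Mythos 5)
Your overall strategy coincides with the paper's: apply Theorem \ref{thm4} with $b_j=1$ in the non-degenerate case, and treat a possible self-paired $z_i=-1$ (which occurs exactly when $n$ is odd) separately by splitting off that term and working with odd $k$. The main case and your identification of the exceptional case are both correct.

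The gap is in how you close the exceptional case. You say you ``apply Theorem \ref{thm4} to the remaining $n-1$ values\dots and restrict $k$ to odd integers,'' but these two steps cannot be combined as stated: Theorem \ref{thm4} bounds $\inf_{k\in\mathbb{Z}}\sum_{j=1}^{n-1}z_j^k$ over \emph{all} integers $k$, and nothing guarantees that a near-minimizing $k$ is odd. If the near-minimizers happen to be even, the split-off term contributes $+1$ rather than $-1$, and you only get $\inf_k\sum_{j=1}^nz_j^k<1-\frac{1}{\pi^4}\log(n-1)$, which is useless (even positive for small $n$). What is needed is a bound on the infimum over odd $k$ alone, and the statement of Theorem \ref{thm4} does not provide one directly. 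The paper's device is to substitute $k=2\kappa+1$ and write $\sum_{j=1}^{n-1}z_j^{2\kappa+1}=\sum_{j=1}^{n-1}b_jw_j^{\kappa}$ with $b_j=z_j$ and $w_j=z_j^2$; one checks that these data satisfy the hypotheses of Theorem \ref{thm4} (the $w_j$ are distinct and not equal to $\pm1$, they are conjugate-closed, the ratios $w_i/w_j=(z_i/z_j)^2$ are not roots of unity, $|b_j|=1$, and the $b_j$ satisfy the required conjugacy condition), giving $\inf_{\kappa}\sum_{j=1}^{n-1}z_j^{2\kappa+1}<-\frac{1}{\pi^4}\log(n-1)$ and hence $\inf_k\sum_{j=1}^nz_j^k\le-1-\frac{1}{\pi^4}\log(n-1)<-\frac{1}{\pi^4}\log n$, the last inequality because $\frac{1}{\pi^4}\log\frac{n}{n-1}<1$. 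This reparametrization --- invoking Theorem \ref{thm4} with the non-constant coefficients $b_j=z_j$ attached to the squared points --- is the one genuine idea in the corollary's proof and is the reason the theorem is stated for general complex $b_j$; it is missing from your write-up.
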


\begin{proof}Note that we have not excluded the possibility that $z_i=-1$ for some $i$.
When $z_i\ne-1$ for all $i$ we are in the non-degenerate case and can apply Theorem \ref{thm4}
immediately. When $z_i=-1$ for some $i$ we can take $z_n=-1$. We now consider the
subsequence of sums for odd $k$. Put $k=2\kappa+1$. Note that
$$\inf_{k\in\mathbb{Z}}\sum_{j=1}^nz_j^k\le -1+\inf_{\kappa}\sum_{j=1}^{n-1}z_j^{2\kappa+1}.$$
Apply Theorem \ref{thm4} to the numbers $b_j=z_j$ for $j=1,\ldots,n-1$ and $z_j^2$ instead of
$z_j$ for $j=1,\ldots,n-1$. Then we find
$$\inf_{k\in\mathbb{Z}}\sum_{j=1}^nz_j^k\le -1-{1\over \pi^4}\log(n-1)<-{1\over\pi^4}\log n.$$
\end{proof}

\section{The continuous case}

The conditions on $b_j,z_j$ in Theorem \ref{thm1} can be seen as an invitation to write the
power sum as a cosine sum. We consider the easier case when $b_j\in\mathbb{R}$ for all $j$.
Then we have
$$\sum_{j=1}^nb_jz_j^k={\rm Re}\left(\sum_{j=1}^nb_jz_j^k\right)
=\sum_{j=1}^nb_j\cos(2\pi\alpha_jk),$$
where we have written $z_j=\exp(2\pi i\alpha_j)$ for all $j$. 
To make things simpler assume that $z_j\ne-1$ for all $j$. Then $n$ is even and
the arguments $\alpha_j$ come in pairs which are opposite modulo $\mathbb{Z}$. Letting
$m=n/2$ we rewrite our sum as
$$2\sum_{j=1}^m b_j\cos(2\pi\alpha_jk),$$
where we can assume that $0<\alpha_j<1/2$ for all $j$.

Corollary \ref{cor1} immediately implies the following.
\begin{cor}\label{cor2} Let $b_1,\ldots,b_m,\alpha_1,\ldots,\alpha_m$ be real numbers such that
the $\alpha_j$ are distinct and strictly between $0$ and $1/2$ for all $j$. Then
$$\inf_{k\in\mathbb{Z}}\sum_{j=1}^m b_j\cos(2\pi\alpha_jk)<-{1\over 2m}\sum_{j=1}^m|b_j|.$$
\end{cor}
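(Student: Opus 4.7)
The plan is a direct reduction to Corollary \ref{cor1}: I would package the cosine sum as a length-$2m$ exponential sum satisfying the conjugation hypotheses of Theorem \ref{thm1}, and then read off the bound.

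Concretely, for $j=1,\ldots,m$ set $z_j:=e^{2\pi i\alpha_j}$ and extend the list by $z_{2m+1-j}:=\overline{z_j}$; likewise set $b_j':=b_j$ and $b_{2m+1-j}':=b_j$. Since $0<\alpha_j<1/2$ and the $\alpha_j$ are distinct, the $2m$ numbers $z_1,\ldots,z_{2m}$ are all distinct, of modulus one, and different from $1$; the symmetries $z_{n+1-i}=\overline{z_i}$ and $b_{n+1-i}'=\overline{b_i'}$ required in Theorem \ref{thm1} (with $n=2m$) hold by construction, using that the $b_j$ are real. Indices with $b_j=0$ may be dropped beforehand so that the nonzero condition on the $b_i'$ is met. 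The conjugate pairing gives
$$\sum_{i=1}^{2m} b_i' z_i^k \;=\; \sum_{j=1}^m b_j\bigl(z_j^k+\overline{z_j}^{\,k}\bigr) \;=\; 2\sum_{j=1}^m b_j\cos(2\pi\alpha_j k),$$
and Corollary \ref{cor1}, combined with the remark after Theorem \ref{thm2} identifying the liminf with the infimum over $\mathbb{Z}$, yields
$$\inf_{k\in\mathbb{Z}}\sum_{i=1}^{2m} b_i' z_i^k \;\le\; -\frac{1}{2m}\sum_{i=1}^{2m}|b_i'| \;=\; -\frac{1}{m}\sum_{j=1}^m|b_j|.$$
Dividing by two delivers the required bound on $\inf_k\sum_{j=1}^m b_j\cos(2\pi\alpha_j k)$.

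The main obstacle I foresee is upgrading $\le$ to the strict $<$ that the corollary asserts. I would chase the sharper form of Theorem \ref{thm1}: the intermediate bound on the $2m$-term sum is $-\sum|b_i'|^2/\sum|b_i'|=-\sum|b_j|^2/\sum|b_j|$, and Cauchy-Schwarz makes this strictly smaller than $-\sum|b_j|/m$ unless all $|b_j|$ coincide. The equal-modulus case is where a little extra care is required, since it is the sole scenario in which the Cauchy-Schwarz step loses no slack; other than that, the entire argument is bookkeeping around the factor of two arising from $2\cos\theta=e^{i\theta}+e^{-i\theta}$, the substantive analytic content having already been supplied by Theorem \ref{thm1}.
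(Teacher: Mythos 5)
Your reduction is exactly the paper's proof: set $n=2m$, $z_j=e^{2\pi i\alpha_j}$, $z_{j+m}=\overline{z_j}$, $b_{j+m}'=b_j$, and invoke Corollary \ref{cor1} together with the remark that the liminf equals the infimum. The strictness worry you flag is genuine but is not yours to resolve: the paper's own proof also only delivers $\le$, and in fact strict inequality fails, e.g.\ for $b_j=1$, $\alpha_j=j/(2m+1)$, where $\sum_{j=1}^m\cos(2\pi\alpha_jk)=-\tfrac12$ for all $k$ not divisible by $2m+1$, so the infimum equals $-\tfrac1{2m}\sum_j|b_j|$ exactly; the corollary should read $\le$, and your proof establishes precisely that.
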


\begin{proof} Apply Corollary \ref{cor1} with $n=2m$ and $b_j=b_{j-m}$ when $j>m$
and $z_j=\exp(2\pi i\alpha_j)$ when $j\le m$ and $z_j=\exp(-2\pi i\alpha_{j-m})$
when $j>m$.
\end{proof}

Similarly Theorem \ref{thm4} implies the following.
\begin{cor}Let $b_1,\ldots,b_m,\alpha_1,\ldots,\alpha_m$ be real numbers such that
the $\alpha_j$ are distinct and strictly between $0$ and $1/2$ for all $j$.
Suppose in addition that none of the differences $\alpha_i-\alpha_j$ with $i\ne j$
and none of the sums $\alpha_i+\alpha_j$ is rational. Then
$$\inf_{k\in\mathbb{Z}}\sum_{j=1}^m b_j\cos(2\pi \alpha_jk)<-{\log(2m)\over 2\pi^4}\min_j|b_j|.$$
\end{cor}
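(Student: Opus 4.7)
The plan is to reduce to Theorem \ref{thm4} by the doubling trick used in Corollary \ref{cor2}; the factor $1/2$ in the target bound (versus $1/\pi^4$ in Theorem \ref{thm4}) will come from the fact that after doubling the power sum equals exactly \emph{twice} the cosine sum. Set $n=2m$, and for $j\le m$ put $\widetilde z_j=e^{2\pi i\alpha_j}$, $\widetilde b_j=b_j$, while for $j>m$ put $\widetilde z_j=e^{-2\pi i\alpha_{2m+1-j}}$, $\widetilde b_j=b_{2m+1-j}$. With this reversed-pair indexing the symmetries $\widetilde z_{2m+1-i}=\overline{\widetilde z_i}$ and $\widetilde b_{2m+1-i}=\overline{\widetilde b_i}$ required by Theorem \ref{thm4} hold (the latter automatically since the $b_j$ are real), and a direct re-indexing yields
$$\sum_{j=1}^{2m}\widetilde b_j\widetilde z_j^k=2\sum_{j=1}^m b_j\cos(2\pi\alpha_jk).$$

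Next I would verify the non-degeneracy hypotheses of Theorem \ref{thm4} for the doubled system. Since $\alpha_j\in(0,1/2)$, no $\widetilde z_j$ equals $1$ or $-1$. Checking that the $\widetilde z_j$ are distinct and that no ratio $\widetilde z_i/\widetilde z_j$ with $i\ne j$ is a root of unity reduces to three irrationality conditions: $\alpha_i-\alpha_j\notin\mathbb Q$ for $i\ne j$ (both indices in the positive-sign block); $\alpha_i+\alpha_j\notin\mathbb Q$ for $i\ne j$ (one index from each block, with non-matching pair); and $2\alpha_i\notin\mathbb Q$ for the self-conjugate pair $(i,2m+1-i)$, whose ratio is $e^{4\pi i\alpha_i}$. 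The first two are hypotheses of the corollary, and the third is obtained from ``none of the sums $\alpha_i+\alpha_j$ is rational'' by allowing $i=j$.

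With the hypotheses of Theorem \ref{thm4} verified, it gives
$$\inf_k\sum_{j=1}^{2m}\widetilde b_j\widetilde z_j^k<-\frac{1}{\pi^4}(\min_j|b_j|)\log(2m),$$
and halving via the identity in the first step yields the claimed bound. The only piece requiring care is the second step, the bookkeeping of root-of-unity ratios after doubling; I do not foresee any other serious obstacle, since the rest is a straightforward adaptation of the proof of Corollary \ref{cor2} with Theorem \ref{thm4} substituted for Corollary \ref{cor1}.
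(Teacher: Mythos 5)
Your proposal is correct and is exactly the route the paper intends: the paper states this corollary with no written proof, merely remarking that it follows from Theorem \ref{thm4} ``similarly'' to the way Corollary \ref{cor2} follows from Corollary \ref{cor1}, and your doubling construction, the verification that the ratio conditions reduce to the stated irrationality of the differences, the sums, and the doubles $2\alpha_i$ (the latter covered because the sums condition carries no restriction $i\ne j$), and the final halving of the bound are precisely that argument carried out in full. No gaps.
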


We introduce the notation
$$ c_S = -  \inf_{k \in \mathbb{Z} }\sum_{j=1}^m b_j \cos ( 2 \pi \alpha_jk), $$
$$ c_T = -  \inf_{t \in \mathbb{R} }\sum_{j=1}^m b_j \cos (2 \pi \alpha_jt). $$
In the notation $c_S, c_T$ we suppress the dependence on the $\alpha$'s and $b$'s. Of course, $ c_S \leq c_T$ for all numbers $\alpha_j$ and $b_j$.
\vskip.3cm

\noindent{\bf Problem 2.} {\it What are the corresponding upper bounds for $c_T$}?
\vskip.3cm

The following result  shows that $c_T=c_S$ under a general condition.

\begin{theorem}\label{continuous}
Let $b_1, \dots, b_m$ be real numbers and let $\alpha_1, \dots, \alpha_m$ be real numbers such that their $\mathbb{Q}$-span does not contain $1$. Then $c_S=c_T$.

\end{theorem}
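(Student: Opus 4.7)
My plan is to reduce the equality $c_S = c_T$ to the question of whether the integer orbit and the real orbit of $(\alpha_1,\ldots,\alpha_m)$ have the same closure in a torus, and then to verify that equality using the hypothesis. The inequality $c_S \leq c_T$ is trivial because $\mathbb{Z}\subset\mathbb{R}$, so all the work goes into the reverse.

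First I would set $\mathbb{T}^m := (\mathbb{R}/\mathbb{Z})^m$, define the continuous function $F\colon \mathbb{T}^m\to\mathbb{R}$ by $F(\theta_1,\ldots,\theta_m)=\sum_j b_j\cos(2\pi\theta_j)$, and introduce the one-parameter subgroup $\phi\colon \mathbb{R}\to\mathbb{T}^m$, $\phi(t)=(t\alpha_1,\ldots,t\alpha_m)\bmod\mathbb{Z}^m$. Then $\sum_j b_j\cos(2\pi\alpha_jt) = F(\phi(t))$, and by continuity of $F$ together with compactness of $\mathbb{T}^m$,
$$-c_T = \min_{\theta\in\overline{\phi(\mathbb{R})}} F(\theta), \qquad -c_S = \min_{\theta\in\overline{\phi(\mathbb{Z})}} F(\theta).$$
It therefore suffices to show $\overline{\phi(\mathbb{R})}=\overline{\phi(\mathbb{Z})}$.

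Both closures are closed subgroups of $\mathbb{T}^m$, so by Pontryagin duality (equivalently, the standard form of Kronecker's theorem) each equals the annihilator in $\mathbb{T}^m$ of its character lattice in $\mathbb{Z}^m$. A character $n=(n_1,\ldots,n_m)$ annihilates $\phi(\mathbb{R})$ iff $\exp(2\pi it\sum_j n_j\alpha_j)=1$ for every $t\in\mathbb{R}$, i.e., $\sum_j n_j\alpha_j = 0$; it annihilates $\phi(\mathbb{Z})$ iff $\sum_j n_j\alpha_j\in\mathbb{Z}$. Call the resulting sublattices $L_T\subseteq L_S$.

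The key step, where the hypothesis enters, is the reverse inclusion $L_S\subseteq L_T$: if some $n\in L_S$ satisfied $\sum_j n_j\alpha_j = N$ for a nonzero integer $N$, then $1 = \sum_j(n_j/N)\alpha_j$ would lie in the $\mathbb{Q}$-span of $\alpha_1,\ldots,\alpha_m$, contradicting the hypothesis. Hence $L_T=L_S$, Pontryagin duality gives $\overline{\phi(\mathbb{R})}=\overline{\phi(\mathbb{Z})}$, and $c_S=c_T$ follows. I do not anticipate a serious obstacle beyond correctly invoking Kronecker's theorem to identify each orbit closure with the annihilator of its character lattice; everything else is routine bookkeeping.
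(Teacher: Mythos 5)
Your proof is correct, but it takes a genuinely different route from the paper's. The paper proves $c_T\le c_S$ by a direct approximation argument: it first establishes (Lemma \ref{lemk}) that for any $t_0$ and $\delta>0$ there is an integer $k$ with $\alpha_j t_0-\alpha_j k$ within $\delta$ of an integer for every $j$ --- obtained by writing the $\alpha_j$ as integer combinations of a basis $\beta_1,\dots,\beta_d$ of their $\mathbb{Q}$-span and applying Kronecker's theorem to the $\beta_i$, which is where the hypothesis that $1$ is not in the span enters --- and then picks $t_0$ nearly realizing $-c_T$ and transfers the value to a nearby integer $k_0$ at a cost of $2\varepsilon$. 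You instead package the whole statement as the equality of orbit closures $\overline{\phi(\mathbb{Z})}=\overline{\phi(\mathbb{R})}$ in $\mathbb{T}^m$ and prove it by duality: both are closed subgroups, hence recovered as annihilators of their character lattices $L_S\supseteq L_T$, and the hypothesis rules out exactly the characters with $\sum_j n_j\alpha_j=N\ne 0$ that would otherwise separate the two. Both arguments ultimately rest on Kronecker's theorem and use the hypothesis identically (a nonzero integer value of $\sum_j n_j\alpha_j$ would place $1$ in the $\mathbb{Q}$-span of the $\alpha_j$); your version is more conceptual and isolates the structural reason for the equality, while the paper's is more elementary and self-contained, needing only the classical form of Kronecker's theorem from Hardy--Wright rather than the biduality $H=(H^{\perp})^{\perp}$ for closed subgroups of the torus.
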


In the proof we use the following consequence of Kronecker's theorem on simultaneous diophantine approximation.

\begin{lemma} \label{lemk}
Let $\alpha_1, \dots, \alpha_n$ be numbers such that their $\mathbb{Q}$-span does not contain $1$. Let $t_0 \in \mathbb{R}$. Given $\delta >0$ there exist integers $k, k_1, \dots, k_n$ such that
$|\alpha_jt_0 - \alpha_jk - k_j| < \delta$
for $j=1, \dots, n$.
\end{lemma}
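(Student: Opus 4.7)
The plan is to invoke the general form of Kronecker's simultaneous approximation theorem: given real numbers $\theta_1,\dots,\theta_n$ and $\phi_1,\dots,\phi_n$, for every $\varepsilon>0$ there exist integers $k,p_1,\dots,p_n$ with $|k\theta_j-p_j-\phi_j|<\varepsilon$ for all $j$ if and only if the following compatibility condition holds: for every integer tuple $(c_1,\dots,c_n)$ with $\sum_j c_j\theta_j\in\mathbb{Z}$, one also has $\sum_j c_j\phi_j\in\mathbb{Z}$.

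I would apply this with $\theta_j=\alpha_j$, $\phi_j=\alpha_j t_0$ and $\varepsilon=\delta$, and then set $k_j=-p_j$; the resulting inequalities are precisely the ones required. It remains to verify the compatibility condition. Suppose integers $c_1,\dots,c_n$ satisfy $\sum_j c_j\alpha_j=m\in\mathbb{Z}$. If $m\ne 0$ then $1=\frac{1}{m}\sum_j c_j\alpha_j$ would lie in the $\mathbb{Q}$-span of $\alpha_1,\dots,\alpha_n$, contradicting the hypothesis. Hence $m=0$, and therefore $\sum_j c_j\alpha_j t_0=0\cdot t_0=0\in\mathbb{Z}$, as required.

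The only real subtlety is to quote the general Kronecker theorem with its necessary-and-sufficient compatibility condition, rather than the more familiar version which assumes $1,\alpha_1,\dots,\alpha_n$ to be $\mathbb{Q}$-linearly independent. The latter hypothesis is strictly stronger than what is given here (for instance $\alpha_1=\alpha_2=\sqrt{2}$ is permitted by our assumption but is not $\mathbb{Q}$-linearly independent together with $1$). Once the correct form of Kronecker's theorem is invoked, the verification of its hypothesis is immediate from the assumption that the $\mathbb{Q}$-span of the $\alpha_j$ avoids~$1$.
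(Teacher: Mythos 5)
Your proof is correct, but it takes a genuinely different route from the paper's. You invoke the general form of Kronecker's theorem --- the version with the necessary-and-sufficient compatibility condition on the target vector (Theorem 444 in Hardy--Wright rather than Theorem 442) --- and verify that condition directly: any integer relation $\sum_j c_j\alpha_j\in\mathbb{Z}$ must in fact satisfy $\sum_j c_j\alpha_j=0$, since otherwise $1$ would lie in the $\mathbb{Q}$-span of the $\alpha_j$, and a relation summing to $0$ is trivially preserved when each $\alpha_j$ is replaced by the target $\alpha_j t_0$. The paper instead reduces to the more familiar ``independent'' form of Kronecker's theorem: it chooses a $\mathbb{Q}$-basis $\beta_1,\dots,\beta_d$ of the span of the $\alpha_j$, normalized so that each $\alpha_j$ is an \emph{integral} combination $\alpha_j=\sum_i\lambda_{ij}\beta_i$, applies Kronecker to the $\beta_i$ (whose span still avoids $1$, so $1,\beta_1,\dots,\beta_d$ are $\mathbb{Q}$-linearly independent) with tolerance $\delta/\Lambda$ where $\Lambda=\max_j\sum_i|\lambda_{ij}|$, and transports the approximation back to the $\alpha_j$ by linearity. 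Your argument is shorter and avoids the basis-normalization step, at the price of quoting the stronger form of Kronecker's theorem; the paper's argument needs only the weaker, more commonly cited version. Both are complete, and your observation that the $\alpha_j$ themselves need not be $\mathbb{Q}$-linearly independent of $1$ (e.g.\ repeated or rationally dependent $\alpha_j$ are allowed) correctly identifies why the naive application of the independent case would fail.
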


\begin{proof}
 Let $\beta_1,\ldots,\beta_d$ be a basis of the $\mathbb{Q}$-vector
space spanned by the $\alpha_j$. Choose $\lambda_{ij}\in \mathbb{Q}$
such that
$$\alpha_j=\sum_{i=1}^d\lambda_{ij}\beta_i.$$
By a convenient choice of the $\beta_i$ we can see to it that $\lambda_{ij}\in \mathbb{Z}$
for all $i,j$.
Put $\Lambda=\max_j(\sum_i|\lambda_{ij}|)$. By Kronecker's theorem  (\cite{hw}, Theorem 442)
there exist integers $k,m_1,\ldots,m_d$ such that
$$|\beta_i t_0-\beta_i k - m_i|<\delta/\Lambda$$
for $i=1, \dots, d$.
Here we use the information that the $\mathbb{Q}$-span of the $\beta_i$'s does not contain $1$.
Put $k_j=\sum_{i=1}^d\lambda_{ij}m_i$. Then we get, for $j=1, \dots, n$,
\begin{eqnarray*}
|\alpha_jt_0-\alpha_jk-k_j|&\le&\sum_{i=1}^d|\lambda_{ij}|\cdot
|\beta_i t_0-\beta_i k-m_i|
<  \sum_{i=1}^d|\lambda_{ij}|\frac{\delta}{\Lambda} \le \delta.
\end{eqnarray*}
\end{proof}

\begin{proof} [Proof of Theorem \ref{continuous}]
It remains to prove that $c_T \leq c_S$.
Let $\varepsilon > 0$. Choose $t_0$ such that $$ 0 \leq c_T + \sum_{j=1}^m b_j \cos (2 \pi \alpha_jt_0)   <  \varepsilon .$$
We apply Lemma 3 with a $\delta$ which is so small that there exists an integer $k_0$ with
$$|\sum_{j=1}^m b_j\cos(2\pi\alpha_jt_0)-\sum_{j=1}^m b_j\cos(2\pi\alpha_jk_0)|<\varepsilon.$$
Hence
$$0\le c_T+\sum_{j=1}^m b_j\cos(2\pi\alpha_jk_0)<2\varepsilon.$$
We deduce that
\begin{eqnarray*}
c_T-c_S&=&c_T+\inf_{k\in \mathbb{Z}}\sum_{j=1}^m b_j\cos(2\pi\alpha_jk)\\
&\le&c_T+\sum_{j=1}^m b_j\cos(2\pi\alpha_jk_0)<2\varepsilon.
\end{eqnarray*}
Since $\varepsilon$ can be chosen arbitrarily close to zero, we conclude $c_S=c_T$.

\end{proof}
\vskip.3cm

{\bf Acknowledgements} We thank Marc Spijker for making valuable comments on drafts of the paper and
providing the information about the application in numerical analysis. We are much indebted to the referee
for his careful reading of the manuscript and for saving us from two embarrassing mistakes.

\vskip.5cm
\end{document}